\newtheorem{theorem}{Theorem}[section]
\newtheorem{corollary}{Corollary}[section]
\newtheorem{definition}{Definition}[section]
\newtheorem{lemma}[theorem]{Lemma}
\theoremstyle{remark}
\newtheorem*{remark}{Remark}
\newtheorem*{example}{Example}
\title{Appell-Lerch sums and $\mathcal{N}=2$ Moduli}
\author{Emile Bouaziz}
\begin{document}
\maketitle

\begin{abstract} We study moduli of odd-framed $\mathcal{N}=2$ elliptic curves subject to certain conditions, and show that the fermionic part of the moduli problem is essentially controlled by the Appell-Lerch sum, familiar from the theory of mock modular forms. \end{abstract}

\section{Introduction} The goal of this short note, which is inspired greatly by the works of Polischuk, \cite{Pol}, and Zwegers, \cite{Zw}, is to provide a (super-) moduli theoretic interpretation of the Appell-Lerch sum, which in our normalisation will be  $$\kappa(u,v;\tau)=\kappa(x,y;q):=\vartheta^{-1}(x)\sum_{n\in\mathbb{Z}}\frac{q^{\binom{n}{2}}(-x)^{n}}{q^{n}-y},$$ for $x=e^{2\pi iu}, y=e^{2\pi iv}, q=e^{2\pi i\tau}$, with $\mathfrak{Im}(\tau)>0$ and $\vartheta(x):=\sum_{n\in\mathbb{Z}}q^{\binom{n}{2}}(-x)^{n}$. 

The function $\kappa$ is by now well known to appear naturally in certain questions arising in Super-Conformal Field Theory, and our approach is in keeping with this as it centres on the study of a certain moduli space of \emph{super-curves}. By this we mean $1|1$ dimensional smooth and proper complex manifolds, which, in keeping with the physical literature, we will refer to as $\mathcal{N}=2$ curves. The appearance of $\kappa$ will be a purely fermionic phenomenon, invisible at the level of the underlying even spaces. In a sense, (the poles of) $\kappa$ will measure the \emph{non-splitness} of an appropriate universal family, which can be taken to be the slogan animating this text.

Slightly more precisely, we will restrict our attention to pointed super curves with fixed topological invariants corresponding to elliptic curves (the reduced part) with degree one line bundles (the odd fibre directions), equipped further with a certain decoration. The bulk of the geometry is thus very well understood and is easily described (for fixed $E$ say) in terms of the dual elliptic curve, $E^{\vee}$, and the (degree $1$ component of the) Poincar\'e sheaf. The interesting phenomena are thus purely odd, so that everything is nilpotent and we are really doing deformation theory. We shall see below that $\kappa$ arises as a meromorphic splitting of an extension of line bundles corresponding to the \emph{odd cotangent exact sequence} attached to the universal $\mathcal{N}=2$ curve with the aforementioned topological invariants.

We begin by studying the moduli problem for a fixed elliptic curve $E$, which essentially amounts to some rather basic function theory on the curve $E$. We will then explain how the results can be globalized so as to allow us to vary the curve. Along the way so we will give algebro-geometric interpretations of some of the constructions arising in the celebrated work of Zwegers, although such intepretations are surely well-known to experts. In particular we will explain the algebraic significance of his non-holomorphic correction function $R$. We will see in particular that it corresponds to an $\mathbb{R}$-analytic \emph{splitting} of the universal $\mathcal{N}=2$, as well as providing a representative in $\bar{\partial}$-cohomology for the extension of line bundles described above.

\section{Constructions with fixed elliptic curve}\subsection{Construction of Odd Deformations} For a detailed introduction to the theory of super-spaces we refer to the work \cite{Wi}. We work over $\mathbb{C}$, and we will pass between the algebraic and holomorphic worlds without further comment. A \emph{family of} $\mathcal{N}=2$ curves will mean a smooth morphism $\mathcal{X}\rightarrow B$ with fibres of dimension $1|1$. With $B=*$ we have that $\mathcal{X}=(X\,|\,L)$ is a \emph{split} space, given as the odd total space of a line bundle $L$ on a smooth curve $X$. We have $X=\mathcal{X}^{red}$, the \emph{reduced} part of $\mathcal{X}$, and $L=\Omega^{0|1}_{\mathcal{X}}$, the odd cotangent sheaf, which is a sheaf on the reduced part. A proper $\mathcal{N}=2$ curve, $\mathcal{X}$, has two topological invariants given by $g(X)$ and $c_{1}(L)$. We recall here that a map $\mathcal{Y}\rightarrow\mathcal{X}$ is called \emph{split} if it realizes $\mathcal{X}$ as the total space of an odd vector bundle on $\mathcal{Y}$. $\mathcal{X}$ is called split if the map $\mathcal{X}^{red}\rightarrow\mathcal{X}$ is split.

\begin{definition} A family $\pi:\mathcal{X}\rightarrow B$ is called \emph{split} if $\pi^{-1}(B^{red})\rightarrow\mathcal{X}$ is split relative to $B$. \end{definition} 
\begin{remark} If $B$ is purely even then any family of $\mathcal{N}=2$ curves over $B$ is automatically split. \end{remark}

Infinitessimal deformations of the $\mathcal{N}=2$ curve $(X\,|\,L)=\mathcal{X}$ are parametrized by $H^{1}(\mathcal{X},T_{\mathcal{X}})$. We can compute this via the projection $p:\mathcal{X}\rightarrow X$, noting that this map is affine and that $p_{*}T_{\mathcal{X}}\cong T_{X}\oplus\mathcal{O}_{X}\oplus\Pi (L^{-1}\oplus T_{X}\otimes L)$, where $\Pi$ denotes the parity-switching functor. Infinitessimal odd deformations are thus parameterized by $H^{1}(X,L^{-1})\oplus H^{1}(X,L\otimes T_{X})$. The topological restrictions we will later impose lead us to focus on the $H^{1}(X,L^{-1})$ summand. The following lemma, whilst trivial, is nonetheless central to our approach. \begin{lemma} An extension $\mathcal{O}\rightarrow V\rightarrow L$ induces a deformation of $(X\,|\,L)$ with total space $(X\,|\,V)$. This deformation is trivial iff it is split in the sense of def 2.1 which is further equivalent to the extension $V$ being split. This establishes a bijection between $H^{1}(X,L^{-1})$ and isomorphism classes of deformations of $\mathcal{X}$ over $\mathbb{A}^{0|1}$ with split total space. \end{lemma}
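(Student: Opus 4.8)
The plan is to unwind the three incarnations of the word \emph{split} and to verify everything at the level of \v{C}ech gluing data, so that the statement reduces to the standard dictionary between first-order deformations and $H^{1}$. The lemma is, as the authors note, essentially formal, and the only real work is keeping the three notions of splitness straight.

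First I would construct the family from an extension $0\to\mathcal{O}\xrightarrow{\iota}V\xrightarrow{\rho}L\to 0$ of class $c\in\operatorname{Ext}^{1}(L,\mathcal{O})=H^{1}(X,L^{-1})$. Form the split super-space $(X\,|\,V)$, whose odd functions are exactly $\Gamma(X,V)$. The sub-bundle $\iota$ yields a distinguished \emph{global} odd function $\theta:=\iota(1)\in H^{0}(X,V)$, and since $\theta^{2}=0$ automatically in $\wedge^{\bullet}V$, this $\theta$ defines a morphism $\pi:(X\,|\,V)\to\mathbb{A}^{0|1}$. I would check that $\pi^{-1}(B^{red})$, cut out by the ideal generated by $\theta$, is $(X\,|\,V/\mathcal{O})=(X\,|\,L)=\mathcal{X}$, so that $\pi$ is a deformation of $\mathcal{X}$ with split total space. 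Choosing a cover on which the extension trivialises then exhibits the gluing of $(X\,|\,V)$ as the original gluing of $\mathcal{X}$ corrected by $\exp(\theta\,f_{\alpha\beta}\partial_{\psi})$, where $\{f_{\alpha\beta}\}$ is the cocycle representing $c$ in the $\Pi L^{-1}$ summand of $p_{*}T_{\mathcal{X}}$. This both identifies the family with the claimed one and provides the bridge to cohomology.

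Next I would establish the three-way equivalence. If $V\cong\mathcal{O}\oplus L$ then $(X\,|\,V)\cong\mathcal{X}\times\mathbb{A}^{0|1}$ with $\pi$ the projection, so the deformation is trivial, and a trivial deformation is visibly split in the sense of Definition 2.1. For the converse, that Definition-2.1 splitness forces the extension to split, I would translate a $B$-relative retraction $(X\,|\,V)\to\mathcal{X}$ of the central-fibre inclusion into its action on generators: such a retraction is the same data as an $\mathcal{O}_{X}$-linear section $s:L\to V$ of $\rho$ compatible with $\theta$, which is precisely a splitting of the extension. This closes the cycle trivial $\Rightarrow$ Definition-2.1-split $\Rightarrow$ extension split $\Rightarrow$ trivial. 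For the bijection, surjectivity comes from running the argument in reverse: given any deformation with split total space I use $\mathcal{Y}^{red}=X$ to write $\mathcal{Y}=(X\,|\,V)$ with $V=\Omega^{0|1}_{\mathcal{Y}}$ of rank two, note that the structure map is a global odd function $\sigma\in H^{0}(X,V)$ which smoothness of $\pi$ forces to be nowhere vanishing, whence $\sigma$ generates a sub-bundle $\mathcal{O}\hookrightarrow V$ with quotient $L$; this recovers an extension with class in $H^{1}(X,L^{-1})$. Injectivity follows because an isomorphism of two such families over $B$ fixing $\mathcal{X}$ induces a cohomology between the associated cocycles, by the description of the first step together with the triviality criterion.

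The main obstacle is the careful handling of the three meanings of \emph{split}: splitness of the total space as an abstract super-manifold, Definition-2.1 splitness of the family relative to $B$, and splitness of the bundle extension. The delicate points are verifying that a $B$-relative retraction of the central fibre is the same data as a linear splitting $s:L\to V$, and that the defining section $\sigma$ is nowhere vanishing, so that the extension structure is genuinely recovered. Once these identifications are in place the correspondence with $H^{1}(X,L^{-1})$ is the standard \v{C}ech dictionary and the remainder is bookkeeping.
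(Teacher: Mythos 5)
Your proof is correct and takes essentially the same route as the paper's: both identify a map $(X\,|\,V)\rightarrow\mathbb{A}^{0|1}$ with a global odd section $\xi\in\Gamma(X,V)$, use smoothness to see that $\mathsf{coker}(\xi)$ is a line bundle (so that $\xi$ cuts out an extension $\mathcal{O}\rightarrow V\rightarrow L$), and read off the central fibre as $(X\,|\,\mathsf{coker}(\xi))$. The paper dismisses the three-way equivalence of splitness notions and the resulting bijection as ``easily checked''; your write-up merely supplies those details, correctly.
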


\begin{proof} A map $(X|V)\rightarrow \mathbb{A}^{0|1}$ induces a global section, $\xi\in\Gamma(X,V)$ by taking the pull-back of the natural global odd cotangent vector on $\mathbb{A}^{0|1}$. Smoothness of the map is equivalent to $\mathsf{coker}(\xi)$ being a vector bundle, and the fibre of $(X\,|\,V)\rightarrow \mathbb{A}^{0|1}$ over $*=\mathbb{A}^{0|1,red}$ is $(X\,|\,\mathsf{coker}(\xi))$. That there is an induced bijection as claimed can now be easily checked.\end{proof}

The following lemma will be of some use to us;

\begin{lemma} Let $f:\mathcal{X}\rightarrow\mathcal{Y}$ be a morphism of smooth super-spaces, so that $\mathcal{X}^{red}\rightarrow\mathcal{Y}^{red}$ is an isomorphism. Then $f$ is an isomorphism iff it induces an isomorphism on $\mathbb{A}^{\,0|1}$-valued points. This is moreover the case iff the induced map $(f^{red})^{*}\Omega_{\mathcal{Y}}^{\,0|1}\rightarrow\Omega^{\,0|1}_{\mathcal{X}}$ is an isomorphism.\end{lemma}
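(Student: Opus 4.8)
The plan is to prove the chain of equivalences by first disposing of the implications from the first condition, which are pure functoriality, and then showing that the two remaining conditions are both governed by the single linear-algebraic datum of the odd cotangent map. Throughout I would identify the underlying reduced spaces via $f^{red}$ and work Zariski-locally on $X:=\mathcal{X}^{red}\cong\mathcal{Y}^{red}$, where by smoothness both structure sheaves acquire the standard form $\mathcal{O}_{\mathcal{X}}\cong\mathcal{O}_{X}[\theta_{1},\dots,\theta_{q}]$ of a super-space with reduced even part. The content lies entirely in the two converse directions.

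First I would treat the equivalence of the second and third conditions by computing $\mathbb{A}^{0|1}$-valued points explicitly. A map $\mathbb{A}^{0|1}=\mathrm{Spec}\,\mathbb{C}[\eta]\rightarrow\mathcal{X}$ is a super-algebra homomorphism $\psi:\mathcal{O}_{\mathcal{X}}\rightarrow\mathbb{C}[\eta]$; since the even part of $\mathbb{C}[\eta]$ is just $\mathbb{C}$, such a $\psi$ amounts to a point $x\in X$ together with a linear functional $\phi$ on the odd conormal fibre $(\mathcal{J}_{\mathcal{X}}/\mathcal{J}_{\mathcal{X}}^{2})|_{x}\cong\Omega^{0|1}_{\mathcal{X}}|_{x}$, where $\mathcal{J}_{\mathcal{X}}$ is the ideal generated by odd functions. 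Precomposition with $f^{\#}$ sends $(x,\phi)$ to $(x,\phi\circ(df^{0|1})_{x})$, where $df^{0|1}$ is precisely the map $(f^{red})^{*}\Omega^{0|1}_{\mathcal{Y}}\rightarrow\Omega^{0|1}_{\mathcal{X}}$ of the statement. Thus the induced map on $\mathbb{A}^{0|1}$-points is, fibrewise over each $x$, the transpose of $(df^{0|1})_{x}$; being linear it is a bijection exactly when $(df^{0|1})_{x}$ is an isomorphism, and ranging over all $x$ this is equivalent to the third condition. (If one prefers, the same computation with $S$-families in place of $\mathbb{C}$-points identifies the functor of $\mathbb{A}^{0|1}$-points with the total space of $\Pi(\Omega^{0|1}_{\mathcal{X}})^{\vee}$, making the linearity manifest.)

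It then remains to show that the third condition forces $f$ to be an isomorphism, for which I would run a filtration argument. Since $f^{\#}$ carries odd functions to odd functions it preserves the odd ideals, $f^{\#}(\mathcal{J}_{\mathcal{Y}})\subseteq\mathcal{J}_{\mathcal{X}}$, and so is a morphism of $\mathcal{J}$-adically filtered sheaves; this filtration is finite because $\mathcal{J}_{\mathcal{X}}^{q+1}=0$, whence $f^{\#}$ is an isomorphism iff $\mathrm{gr}\,f^{\#}$ is. By the local normal form for smooth super-spaces the associated graded is the exterior algebra $\mathrm{gr}\,\mathcal{O}_{\mathcal{X}}\cong\bigwedge^{\bullet}_{\mathcal{O}_{X}}(\mathcal{J}_{\mathcal{X}}/\mathcal{J}_{\mathcal{X}}^{2})$, so the algebra map $\mathrm{gr}\,f^{\#}$ is determined by its behaviour in degrees $0$ and $1$: in degree $0$ it is $f^{red,\#}$, an isomorphism by hypothesis, and in degree $1$ it is exactly the map of the third condition. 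Hence that condition makes $\mathrm{gr}\,f^{\#}$, and therefore $f^{\#}$, an isomorphism.

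The main obstacle, and the step deserving the most care, is the structural input just invoked: the identification of $\mathrm{gr}\,\mathcal{O}_{\mathcal{X}}$ with the exterior algebra on the odd conormal sheaf, which is what allows a degree-one isomorphism to propagate to all degrees. One must also be slightly careful that the even part is reduced, so that $f^{red}$ genuinely controls $\mathrm{gr}^{0}$; this is automatic for super-manifolds in the sense of \cite{Wi} and for the families considered here, whose nilpotents are entirely odd. Granting these points, the finiteness of the filtration upgrades the graded isomorphism to a genuine one by an evident induction on filtration degree, completing the argument.
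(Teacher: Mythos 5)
Your proof is correct and takes essentially the same approach as the paper: the paper's one-line argument invokes precisely the $\mathcal{J}$-adic filtration of a smooth super-algebra with associated graded $Sym_{A}(\Pi\,\Omega^{\,0|1}_{\mathcal{A}})$, which is the structural fact you isolate and then propagate from degree one to all degrees. You additionally spell out the reduction of the $\mathbb{A}^{\,0|1}$-point condition to the odd cotangent condition, which the paper leaves implicit but which is the same computation.
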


\begin{proof} This follows from the fact that a smooth super algebra $\mathcal{A}$, with $A:=\mathcal{A}^{red}$, admits a natural filtration with associated graded $Sym_{A}(\Pi\,\Omega_{\mathcal{A}}^{\,0|1})$. \end{proof}

\subsection{Elliptic Curves and Theta Bundles} Fix an elliptic curve $E=E_{q}=\mathbb{C}^{*}/q^{\mathbb{Z}}$, which we write in multiplicative notation. $E^{\vee}$ will denote the dual elliptic curve, which is isomorphic to $E$ via the Abel-Jacobi map. We will let $x$ denote a multiplicative coordinate on $E$, and $y$ a multiplicative coordinate on $E^{\vee}$. The bundle $\mathcal{O}_{E}(e)$, where $e\in E(\mathbb{C})$ is the neutral element, will be denoted $\Theta(x)$. $\Theta(y)$ will denote the corresponding bundle on $E^{\vee}$. Making use of the group structure we define also the bundle $\Theta(xy)$ on $E\times E^{\vee}$ in an evident manner. Descent with respect to the covering $\mathbb{C}^{*}\rightarrow E_{q}$ identifies isomorphism classes of line bundles on $E_{q}$ with classes in group cohomology $H^{1}(q^{\mathbb{Z}},\mathcal{O}_{\mathbb{C}^{*}}^{*})$, which are referred to as \emph{automorphy factors}, see subsection 3.1 below for a description of this formalism in a less trivial case. Global sections of the corresponding line bundle are given by functions with transformation properties (with respect to $q^{\mathbb{Z}}$) specified by the automorphy factor. $\Theta(x)$ corresponds to the automorphy factor $q^{n}\mapsto (-x)^{-n}q^{-\binom{n}{2}}$. There is a one-dimensional space of global sections of $\Theta(x)$ spanned by $\vartheta(x)=\vartheta(x;q)=\sum (-x)^{n}q^{\binom{n}{2}}$, the theta function corresponding to the \emph{odd} spin structure. A point $y\in E^{\vee}(\mathbb{C})$ specifies a line bundle $L_{y}$ on $E$, with $c_{1}(L_{y})=0$, with corresponding automorphy factor $q^{n}\mapsto y^{n}$. 

\begin{remark} We have $\Theta(xy)|_{E}\cong \Theta(x)\otimes L_{y}^{-1}$, as can be confirmed by comparing their respective automorphy factors. \end{remark}

We consider now moduli of (weakly) pointed $\mathcal{N}=2$ curves, $\mathcal{X}$, with $\mathcal{X}^{red}=E$, and $c_{1}(\Omega^{0|1})=1$ fibre-wise. By a \emph{weakly pointed family} we mean a family of $\mathcal{N}=2$ curves, equipped with a distinguished section of the induced map of reduced spaces. In fact we will consider a certain decorated moduli space so as to rigidify the situation somewhat. We will essentially consider families of curves equipped with non-zero odd cotangent vectors, up to isomorphisms. 

\begin{definition} Let $\mathcal{M}^{\mathcal{N}=2}_{E}$ denote the moduli functor parameterizing the following data up to isomorphism; \begin{itemize}\item a family, $\pi$, of $\mathcal{N}=2$ curves with reduced part $E$ and $c_{1}(\Omega^{0|1})=1$ fibre-wise, \item a distinguished section of the map $\pi^{red}$, \item a trivialization of the determinant of the push-forward of the odd cotangent sheaf $\mathbb{R}^{0}\pi_{*}\Omega^{0|1}_{\pi}$, which we refer to as an \emph{odd framing} of the family.\end{itemize} Further let $\mathcal{C}^{\mathcal{N}=2}_{E}\rightarrow\mathcal{M}^{\mathcal{N}=2}_{E}$ denote the resulting universal family. \end{definition}

\begin{remark} An $\mathcal{N}=2$ curve (over a point) always has a $\mathbb{C}^{*}$ of automorphisms given by re-scaling odd fibre directions. The data of the odd framing rigidifies the curve, so that we no longer have such automorphisms.  \end{remark}

If we restrict to even bases the moduli space is simply $E^{\vee}$ and we claim the universal curve is $$\pi:(E\times E^{\vee}\,|\,\Theta(xy))\rightarrow E^{\vee},$$ with the evident section. Certainly this will follow if we show $\mathbb{R}^{0}\pi_{*}\Theta(xy)\cong\mathcal{O}$, which we do in lemma 2.3 below. The fibre over $y\in E^{\vee}(\mathbb{C})$ is the $\mathcal{N}=2$ curve $(E\,|\,\Theta(x)L_{y}^{-1})$. 

\begin{remark} Note that we now see now another reason for the inclusion of the trivialization of $\mathbb{R}^{0}\pi_{\*}(\Omega^{0|1})$. Indeed otherwise we could equally well have taken $\Theta(xy)\otimes\Theta(y)$. \end{remark}

We will now calculate the genuine moduli space, which is necessarily a purely odd thickening of $E^{\vee}$, with universal curve an odd thickening of $(E\times E^{\vee}\,|\,\Theta(xy))$. We calculate first the sheaf on the base parameterizing deformations along the fibres. A relative version of the compupation of infinitessimal deformations from subsection 2.1 identifies this with $\mathbb{R}^{1}\pi_{*}\Theta(xy)^{-1}\cong\mathbb{R}^{0}\pi_{*}\Theta(xy)$. 

\begin{lemma} There is an isomorphism $\mathbb{R}^{0}\pi_{*}\Theta(xy)\cong\mathcal{O}$, whence an odd framing of the family above. \end{lemma}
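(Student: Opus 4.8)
The plan is to produce an explicit trivializing section of $\mathbb{R}^{0}\pi_{*}\Theta(xy)$ and then deduce the framing formally. First I would pin down the fibrewise cohomology. As observed above, the restriction of $\Theta(xy)$ to a fibre $E\times\{y\}$ is $\Theta(x)\otimes L_{y}^{-1}$, a degree one line bundle on $E$. Riemann--Roch then gives $h^{0}(E,\Theta(x)L_{y}^{-1})=1$ and $h^{1}(E,\Theta(x)L_{y}^{-1})=0$ for every $y\in E^{\vee}(\mathbb{C})$. Since the fibrewise $h^{0}$ is constant and $h^{1}$ vanishes identically, cohomology-and-base-change (Grauert) shows that $\mathbb{R}^{0}\pi_{*}\Theta(xy)$ is a line bundle on $E^{\vee}$ whose formation commutes with base change, and that $\mathbb{R}^{1}\pi_{*}\Theta(xy)=0$.

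It then remains to trivialize this line bundle, and here I would simply write down a global section. Observe that $\Theta(xy)$ is the pullback $m^{*}\Theta$ of $\Theta(x)$ along the multiplication map $m:E\times E^{\vee}\to E$, $(x,y)\mapsto xy$, which is exactly the \emph{evident} construction referred to above. Consequently $\vartheta(xy):=\sum_{n\in\mathbb{Z}}(-xy)^{n}q^{\binom{n}{2}}=m^{*}\vartheta$ is a genuine global section of $\Theta(xy)$ over the total space $E\times E^{\vee}$. One checks the automorphy directly: substituting $x\mapsto qx$ and re-indexing by $\binom{n+1}{2}=\binom{n}{2}+n$ gives $\vartheta(qxy)=(-xy)^{-1}\vartheta(xy)$, and symmetrically in $y$, which is precisely the transformation law defining $\Theta(xy)$. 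Pushing forward, $\vartheta(xy)$ determines a global section $s$ of the line bundle $\mathbb{R}^{0}\pi_{*}\Theta(xy)$.

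Finally I would verify that $s$ is nowhere vanishing. By base change, the value of $s$ at a point $y_{0}$ is the class of $\vartheta(xy_{0})\in H^{0}(E,\Theta(x)L_{y_{0}}^{-1})$; for fixed $y_{0}$ this is a translate of the odd theta function, in particular not identically zero as a function of $x$, and hence spans the one-dimensional fibre. Thus $s$ is a nowhere-zero section and furnishes the isomorphism $\mathbb{R}^{0}\pi_{*}\Theta(xy)\cong\mathcal{O}$. Since the relative odd cotangent sheaf of the split family is $\Omega^{0|1}_{\pi}=\Theta(xy)$ and the pushforward has rank one, so that its determinant is itself, this trivialization is exactly the required odd framing, spanned by $\vartheta(xy)$. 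The only point demanding care is this last one --- confirming that the evident section does not drop rank in any fibre --- since the whole argument hinges on $\vartheta(xy)$ remaining fibrewise nonzero; the base-change and automorphy computations are routine.
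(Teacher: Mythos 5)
Your proof is correct, but it takes a genuinely different route from the paper's. You trivialize $\mathbb{R}^{0}\pi_{*}\Theta(xy)$ by exhibiting the explicit global section $\vartheta(xy)=m^{*}\vartheta$ and checking, via Grauert/base change, that it is nonzero in every fibre $H^{0}(E,\Theta(x)L_{y}^{-1})$; this is constructive and has the bonus of producing the actual framing (it is precisely the normalization the paper adopts later in Section 3, where $\Theta(u+v;\tau)$ is ``determined by demanding that $\vartheta(u+v,\tau)$ is a section of it''). The paper instead argues indirectly: it notes $h^{0}(E\times E^{\vee},\Theta(xy))=1$, computes $\chi(\Theta(xy))=0$ by Riemann--Roch on the surface (the anti-diagonal has vanishing self-intersection) and $h^{2}=0$ by Serre duality, so $h^{1}=1$; the Leray spectral sequence (using $\mathbb{R}^{1}\pi_{*}=0$) then shows the line bundle $\mathbb{R}^{0}\pi_{*}\Theta(xy)$ on $E^{\vee}$ has nonvanishing $H^{0}$ and $H^{1}$, which forces it to be $\mathcal{O}$. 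The paper's argument avoids any base-change or fibrewise nonvanishing verification but gives no preferred trivialization; yours requires the (routine) check that $\vartheta(xy_{0})$ is not identically zero on each fibre, which you correctly flag as the one point needing care, and in exchange delivers the framing explicitly. Both are sound.
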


\begin{proof} Let us first note that the fibres $H^{1}(E,\Theta L_{y}^{-1})$ are one dimensional by Riemann-Roch, so we have a line bundle. The space of global sections, $H^{0}(E\times E^{\vee},\Theta(xy))$, is one dimensional. 

The anti-diagonal on $E\times E^{\vee}$ has vanishing self-intersection, whence Riemann-Roch for the surface $E\times E^{\vee}$ implies that we have $\chi(\Theta(xy))=0$. We conclude that $H^{1}$ is one dimensional, as $H^{2}$ vanishes by Serre-Duality. 

Certainly $\mathbb{R}^{1}\pi_{*}\Theta(xy)=0$ and now considering the Leray spectral sequence for the projection, together with the computations of $H^{1}$ and $H^{2}$, we deduce that the line bundle $\mathbb{R}^{0}\pi_{*}\Theta(xy)$ has non-vanishing $H^{0}$ and $H^{1}$. It is thus necessarily $\mathcal{O}$. That $\mathbb{R}^{0}\pi_{*}$ is also trivial is now immediate from Grothendieck duality. \end{proof}

Note that the above implies that the sheaf on the base parametrizing odd deformations fibrewise is just $\mathcal{O}$. This suggests that we have $\mathcal{M}^{\mathcal{N}=2}_{E}\cong (E^{\vee}\,|\,\mathcal{O})$, so we should look for a universal family on this space. We saw in the proof above that there is a one dimensional space of extensions of $\mathcal{O}$ by $\Theta(xy)$. We will now describe the resulting sheaf more explicitly in automorphic terms, and in particular we will see that the extensions realizes fibrewise the extensions corresponding to the odd deformations of $(E\,|\,\Theta(x)L_{y}^{-1})$ described by lemma 2.1 above.

\begin{lemma} The unique non-trivial extension $\mathcal{O}\rightarrow\mathcal{K}\rightarrow\Theta(xy)$ is such that for all $y\in E^{\vee}(\mathbb{C})$, the corresponding extension class $[\mathcal{K}_{y}]$ is non-zero.\end{lemma}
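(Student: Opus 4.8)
The plan is to identify the fibrewise extension classes $[\mathcal{K}_y]$ with the values of a single global section of the push-forward sheaf $\mathbb{R}^{0}\pi_{*}\Theta(xy)$, and then to exploit the triviality of this sheaf together with cohomology and base change. Throughout I read the extension as $0\to\mathcal{O}\to\mathcal{K}\to\Theta(xy)\to 0$, classified by $\mathrm{Ext}^{1}(\Theta(xy),\mathcal{O})=H^{1}(E\times E^{\vee},\Theta(xy)^{-1})$, in keeping with the conventions of Lemma 2.1.

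First I would record the two relevant push-forwards along $\pi$. On a fibre $E\times\{y\}$ the restriction of $\Theta(xy)^{-1}$ is $\Theta(x)^{-1}L_{y}$, a line bundle of degree $-1$, which has no global sections; hence $\mathbb{R}^{0}\pi_{*}\Theta(xy)^{-1}=0$. The remaining push-forward is $\mathbb{R}^{1}\pi_{*}\Theta(xy)^{-1}\cong\mathbb{R}^{0}\pi_{*}\Theta(xy)\cong\mathcal{O}_{E^{\vee}}$ by Lemma 2.3; in particular it is a line bundle whose fibre at $y$ is $H^{1}(E,\Theta(xy)^{-1}|_{E})$, the space in which $[\mathcal{K}_{y}]$ lives.

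Next I would run the Leray spectral sequence for $\pi$ applied to $\Theta(xy)^{-1}$. Since $\mathbb{R}^{0}\pi_{*}\Theta(xy)^{-1}=0$, the edge map is an isomorphism $H^{1}(E\times E^{\vee},\Theta(xy)^{-1})\xrightarrow{\sim}H^{0}(E^{\vee},\mathbb{R}^{1}\pi_{*}\Theta(xy)^{-1})=H^{0}(E^{\vee},\mathcal{O})=\mathbb{C}$. The global extension class of $\mathcal{K}$ is by hypothesis the non-zero element of $H^{1}(E\times E^{\vee},\Theta(xy)^{-1})$, so under this isomorphism it corresponds to a non-zero, hence nowhere-vanishing, constant section $s$ of $\mathbb{R}^{1}\pi_{*}\Theta(xy)^{-1}$.

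Finally, the crux is to identify $[\mathcal{K}_{y}]$ with the value $s(y)$. Pulling back the extension along $i_{y}:E\times\{y\}\hookrightarrow E\times E^{\vee}$ is exactly restriction of cohomology classes, and this restriction fits into the base-change square relating $i_{y}^{*}$ on $H^{1}$ of the total space to evaluation of the section of $\mathbb{R}^{1}\pi_{*}$. Because $\mathbb{R}^{1}\pi_{*}\Theta(xy)^{-1}$ is locally free of rank one and the fibre dimension $h^{1}(E,\Theta(xy)^{-1}|_{E})=1$ is constant in $y$, the base-change map $\mathbb{R}^{1}\pi_{*}\Theta(xy)^{-1}\otimes k(y)\to H^{1}(E,\Theta(xy)^{-1}|_{E})$ is an isomorphism for every $y$ by Grauert's theorem. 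Hence $[\mathcal{K}_{y}]$ is the image of the nowhere-vanishing constant $s(y)$ and is non-zero for all $y$. The main obstacle is precisely this last compatibility: one must ensure that the fibrewise $\mathrm{Ext}$-class is genuinely computed by base change from the global class, with no jumping on special fibres. This is guaranteed by the vanishing of $\mathbb{R}^{0}\pi_{*}\Theta(xy)^{-1}$ together with the constancy of $h^{1}$. One could instead argue entirely in automorphic terms, presenting $\mathcal{K}$ by an explicit $1$-cocycle and exhibiting the obstruction to a fibrewise splitting by hand; this is the computation that eventually produces $\kappa$, and I would defer it to the explicit analysis below.
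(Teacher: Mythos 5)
Your proof is correct, but it takes a genuinely different route from the paper. The paper argues concretely: it writes down an explicit automorphy-factor model for $\mathcal{K}^{*}$ whose off-diagonal entry is built from the Appell--Lerch sum $\kappa$, and then shows by direct function theory that no holomorphic lift of $1\in H^{0}(\mathcal{O})$ exists on any fibre --- for $y\neq 1$ by noting that the unique holomorphic solution of $s(qx)-ys(x)=\vartheta$ is $\vartheta\kappa$, which is not divisible by $\vartheta$ (an argument going back to Polishchuk), and for $y=1$ by a separate Laurent-expansion argument. You instead argue abstractly: the vanishing of $\mathbb{R}^{0}\pi_{*}\Theta(xy)^{-1}$ (degree $-1$ fibrewise) makes the Leray edge map $H^{1}(E\times E^{\vee},\Theta(xy)^{-1})\to H^{0}(E^{\vee},\mathbb{R}^{1}\pi_{*}\Theta(xy)^{-1})\cong H^{0}(E^{\vee},\mathcal{O})$ an isomorphism, the constancy of $h^{1}$ along the fibres makes base change an isomorphism via Grauert, and the compatibility of restriction to fibres with the edge map then identifies each $[\mathcal{K}_{y}]$ with the value of a nowhere-vanishing constant section. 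All the ingredients you invoke are already established or implicit in Lemma 2.3 and the surrounding discussion, and the key compatibility you flag (fibrewise Ext computed by base change from the global class) is indeed the crux and is correctly justified. What your argument buys is uniformity --- no case split at $y=1$ --- and a conceptual explanation of why triviality of $\mathbb{R}^{1}\pi_{*}\Theta(xy)^{-1}$ forces fibrewise non-splitness; what it does not produce is the explicit $\kappa$-cocycle, which the paper needs anyway for Corollary 2.1 (the meromorphic splitting), so in the context of the paper the explicit construction cannot be avoided entirely, only deferred, as you note.
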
 \begin{proof} We give a model for the automorphy factor representing $\mathcal{K}^{*}$. Consider the vector of meromorphic functions on $E\times E^{\vee}$, $$s(x,y):=\binom{\kappa(x,y)}{1},$$ where $$\kappa(x,y)=\vartheta^{-1}(x)\sum\frac{q^{\binom{n}{2}}(-x)^{n}}{q^{n}-y}$$ is the Appell-Lerch sum. It is elementary to check that the vector $s(x,y)$ transforms according to the automorphy factors  $$\binom{1}{0}\mapsto\begin{bmatrix}    

\,-xy&-x\\
\,0&1\\
\end{bmatrix}, \binom{0}{1}\mapsto\begin{bmatrix}\,-xy&-x\\ \,0 &1\end{bmatrix}.$$ Observe that this describes an extension of $\mathcal{O}$ by $\Theta^{-1}$, as the matrices are upper triangular with the appropriate diagonal entries. We let this define the sheaf $\mathcal{K}^{*}$. Now to check that for any $y$ the extension corresponding to the fibre $\mathcal{K}^{*}_{y}$ is non-split one must show that there is never a holomorphic lift of the section $1\in H^{0}(\mathcal{O})$.
 
To see this first consider $y\neq 1$ (modulo $q^{\mathbb{Z}}$). Multiplying by $\vartheta$ we would have a holomorphic function $s(x)$, which is divisible by $\vartheta$, such that $s(qx)-ys(x)=\vartheta$. There is however a unique such function, given by $\vartheta\kappa$, which is not (holomorphically) divisible by $\theta$. Indeed the sub-bundle $L_{y}$ of the corresponding extension has no sections for such $y$. Note that this argument appears in \cite{Pol}. Now let $y=1$ (modulo $q^{\mathbb{Z}}$). In this case we are looking for a holomorphic function $s$ so that $s(qx)-s(x)=\vartheta$, which is impossible by considering Laurent expansions around $0$. \end{proof}

\begin{tcolorbox}\begin{corollary} We have $\mathcal{M}^{\mathcal{N}=2}_{E}\cong (E^{\vee}\,|\,\mathcal{O})$ and $\mathcal{C}^{\mathcal{N}=2}_{E}\cong (E\times E^{\vee}\,|\,\mathcal{K})$. The family, $\pi:\mathcal{C}^{\mathcal{N}=2}_{E}\rightarrow\mathcal{M}^{\mathcal{N}=2}_{E}$ is induced from the extension according to lemma 2.1. The Appell-Lerch sum $\kappa(x,y)$ describes a meromorphic splitting of the family $\pi$.\end{corollary}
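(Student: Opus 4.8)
The plan is to read all four assertions off the lemmas already in place, treating the base, the universal curve, and the meromorphic splitting in turn, with the only genuine work lying in promoting the fibrewise statements to a representability claim. First I would pin down the base: we already know that $\mathcal{M}^{\mathcal{N}=2}_E$ is a purely odd thickening of $E^\vee$ and, by Lemma 2.3, that the sheaf $\mathbb{R}^0\pi_*\Theta(xy)$ governing fibrewise odd deformations is isomorphic to $\mathcal{O}$. Since this deformation sheaf has rank one, the thickening carries a single odd direction, and the filtration of a smooth super-algebra with associated graded $\mathrm{Sym}(\Pi\,\Omega^{0|1})$ from the proof of Lemma 2.2 collapses after its first term, all higher symmetric powers of a rank-one odd sheaf vanishing. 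The thickening is thus forced to be split, giving $\mathcal{M}^{\mathcal{N}=2}_E\cong(E^\vee\,|\,\mathcal{O})$, and Lemma 2.2 promotes this to an honest isomorphism whose induced map on odd cotangent sheaves is the identification just produced.

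Next I would construct the universal curve over $(E^\vee\,|\,\mathcal{O})$ via the relative form of Lemma 2.1. The universal first-order deformation of the split family $(E\times E^\vee\,|\,\Theta(xy))$ is the one whose Kodaira--Spencer map is the identity endomorphism of the deformation sheaf $\mathcal{O}$; under the identification of that sheaf with the relative $\mathrm{Ext}^1(\Theta(xy),\mathcal{O})$ this is exactly the tautological extension $\mathcal{O}\rightarrow\mathcal{K}\rightarrow\Theta(xy)$ of Lemma 2.4. Applying the relative version of Lemma 2.1 then realizes the associated deformation as $(E\times E^\vee\,|\,\mathcal{K})$, with the extension presenting $\pi$ as asserted. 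Because the base is $0|1$ over $E^\vee$, the relevant square-zero odd parameter kills all higher-order terms, so the deformation problem is unobstructed and this family is genuinely universal rather than merely versal to first order. The non-degeneracy input is Lemma 2.4: the class $[\mathcal{K}_y]$ is non-zero for every $y$, so the family is nowhere fibrewise split and correctly represents the functor on $\mathbb{A}^{0|1}$-valued points.

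Finally I would extract the meromorphic splitting straight from the automorphy data of Lemma 2.4. The fibrewise non-splitness there was detected by observing that the section $1$ of $\mathcal{O}$ admits no holomorphic lift to $\mathcal{K}$, but does admit the meromorphic lift recorded by $\kappa$ (the function $\vartheta\kappa$, not holomorphically divisible by $\vartheta$). Concretely the vector $s(x,y)=\binom{\kappa(x,y)}{1}$ is a meromorphic frame trivializing the defining extension, so away from the polar locus of $\kappa$ it exhibits $\mathcal{K}\cong\mathcal{O}\oplus\Theta(xy)$ and hence, by Lemma 2.1, splits $\pi$ there. Transporting this through the isomorphisms above identifies $\kappa$ as a meromorphic splitting of the universal family whose poles precisely record the non-splitness established in Lemma 2.4.

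The step I expect to be the main obstacle is the representability assertion itself, rather than any single computation. Lemma 2.1 classifies only deformations whose total space is split, whereas the functor $\mathcal{M}^{\mathcal{N}=2}_E$ of Definition 2.2 must be tested against arbitrary, possibly non-split, bases. The way through is to reduce to $\mathbb{A}^{0|1}$-valued points using Lemma 2.2, and to use the odd framing to rigidify away the $\mathbb{C}^*$ of fibre-rescaling automorphisms flagged in the Remark after Definition 2.2; one then has to confirm that no non-split test family escapes this reduction, which is exactly where the vanishing of higher symmetric powers in the single odd direction does the real work.
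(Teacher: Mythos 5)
Your proposal is correct and follows essentially the same route as the paper: the family is induced from the extension via the (relative) Lemma 2.1, the isomorphism is reduced to $\mathbb{A}^{0|1}$-valued points by Lemma 2.2 with Lemma 2.4 supplying the fibrewise non-triviality, and the splitting is the meromorphic lift $\binom{\kappa}{1}$ of the section $1\in H^{0}(\mathcal{O})$. The only cosmetic difference is that you argue directly that the base, having a single odd direction, is forced to be split, whereas the paper simply constructs the classifying map from $(E^{\vee}\,|\,\mathcal{O})$ and checks it is an isomorphism.
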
\end{tcolorbox}

\begin{proof} We have the family, $$\pi:(E\times E^{\vee}\,|\,\mathcal{K})\longrightarrow (E^{\vee}\,|\,\mathcal{O}),$$  induced from the extension. The extension defining $\mathcal{K}$ is the short exact sequence for the odd cotangent spaces, whence the bundle $\Omega^{0|1}_{\pi}$ is isomorphic to $\Theta(xy)$, and the family is thus framed. By definition this produces a morphism $(E^{\vee}\,|\,\mathcal{O})\rightarrow\mathcal{M}^{\mathcal{N}=2}_{E}$, such that $(E\times E^{\vee}\,|\,\mathcal{K})$ is pulled back from $\mathcal{C}^{\mathcal{N}=2}_{E}$.

 It is an isomorphsim on reduced spaces, whence to see that is an isomorphism we must check that is an isomorphism on $\mathbb{A}^{\,0|1}$-valued points, by lemma 2.2. This is the content of lemma 2.4, which says precisely that the family, $(E\times E^{\vee}\,|\,\mathcal{K})\rightarrow (E^{\vee}\,|\,\mathcal{O}),$ sees all the $\mathbb{A}^{\,0|1}$-families of curves fibre-wise. 

That $\kappa$ describes a meromorphic splitting (cf definition 2.1 above) follows from the isomorphism $\pi^{-1}(\mathcal{M}^{\mathcal{N}=2,red}_{E})\cong (E\times E^{\vee}\,|\,\mathcal{O})$, and the fact that a splitting of an extension of $\mathcal{O}$ is a lift of the section $1\in H^{0}(\mathcal{O})$. \end{proof}

\section{Varying the curve}
\subsection{Automorphic and Geometric Data}
We collect here some general facts about sections of extensions of vector bundles expressed in automorphic language. In the next subsection we will specialize to the case appearing in the work of Zwegers, but we prefer to work generally for now to emphasize that the phenomena are not unique to the context in \cite{Zw}. Fix the following data; \begin{itemize}\item$\Gamma$ a discrete group acting on a Stein complex manifold, $X$,with finite stabilizers. \item An automorphy factor $j\in H^{1}(X,\mathcal{O}_{X}^{*})$, $j:\gamma\in \Gamma\mapsto j_{\gamma}\in \mathcal{O}_{X}^{*}$. \item A class $\delta\in H^{1}(X,\mathcal{O}(X)|^{j})$, where $\mathcal{O}(X)|^{j}$ denotes the $\Gamma$-module with $\gamma$ acting as $f\mapsto f|^{j}_{\gamma}$.\end{itemize}

By descent, this corresponds to the following geometric data; 
\begin{itemize} \item The analytic Deligne-Mumford stack $X//\Gamma$. \item A line bundle on $X//\Gamma$, which we will denote $\mathcal{L}_{j}$. \item A rank $2$ vector bundle, $\mathcal{V}_{\delta}$, fitting into an extension, $\mathcal{L}_{j}\rightarrow\mathcal{V}_{\delta}\rightarrow\mathcal{O}$, corresponding to the class $\delta\in H^{1}(X//\Gamma,\mathcal{L}_{j})$. \end{itemize}

\begin{lemma} A meromorphic trivialization of this bundle is equivalent to the data of a meromorphic function $s$, on $X$, satisfying the transformation property $$\gamma^{*}s=j_{\gamma}s+\delta_{\gamma},\, \gamma\in\Gamma.$$ \end{lemma}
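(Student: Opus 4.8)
The plan is to unwind the descent formalism and show that a meromorphic trivialization of $\mathcal{V}_\delta$ corresponds exactly to a meromorphic splitting of the defining extension, then translate such a splitting into the claimed transformation law. The extension $\mathcal{L}_j \to \mathcal{V}_\delta \to \mathcal{O}$ pulls back along $X \to X//\Gamma$ to a $\Gamma$-equivariant extension of the trivial $\Gamma$-module $\mathcal{O}(X)$ by $\mathcal{O}(X)|^j$ on the Stein manifold $X$. Since $X$ is Stein, the pulled-back extension of sheaves splits meromorphically (indeed holomorphically, as a sequence of $\mathcal{O}_X$-modules) once we forget the $\Gamma$-action; the whole content of $\delta$ is in how the $\Gamma$-action fails to respect a chosen splitting. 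So first I would fix a splitting over $X$ and express $\mathcal{V}_\delta$ concretely as $\mathcal{O}(X) \oplus \mathcal{O}(X)|^j$ with a twisted $\Gamma$-action, exactly as in the upper-triangular automorphy matrices appearing in the proof of Lemma 2.4.

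Concretely, a meromorphic trivialization of $\mathcal{V}_\delta$ on $X//\Gamma$ is a meromorphic splitting of the extension, i.e. a meromorphic section of the surjection $\mathcal{V}_\delta \to \mathcal{O}$ lifting the tautological section $1$. Pulling back to $X$, and using the chosen splitting to write the total space as a pair of components, such a lift is recorded by its $\mathcal{L}_j$-component, which is a single meromorphic function $s$ on $X$. The requirement that this lift descend to $X//\Gamma$ — equivalently, that it be $\Gamma$-equivariant for the twisted action encoding $\delta$ — is precisely the inhomogeneous transformation law $\gamma^* s = j_\gamma s + \delta_\gamma$. Here the $j_\gamma s$ term comes from the automorphy factor acting on the $\mathcal{L}_j$-component, and the additive cocycle term $\delta_\gamma$ is exactly the component of $\delta \in Z^1(\Gamma, \mathcal{O}(X)|^j)$ measuring the failure of the naive splitting to be equivariant. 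I would verify that the cocycle condition on $\delta$ is compatible with composing two group elements, so that the transformation law is self-consistent, and that shifting $s$ by the $\mathcal{L}_j$-pullback of a holomorphic section corresponds to modifying $\delta_\gamma$ by a coboundary, matching the identification of $\delta$ as a class in $H^1$ rather than a cocycle.

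The one point requiring genuine care, rather than bookkeeping, is the passage between cohomology on the quotient stack $X//\Gamma$ and $\Gamma$-equivariant data on $X$: I would invoke the identification $H^*(X//\Gamma, \mathcal{F}) \cong H^*(\Gamma, \Gamma(X, \tilde{\mathcal{F}}))$ valid because $X$ is Stein (so higher sheaf cohomology of the coherent sheaf $\tilde{\mathcal{F}}$ over $X$ vanishes, and the Čech-to-derived-functor / Hochschild--Serre type spectral sequence degenerates to group cohomology of global sections). This is what lets me assert that $\delta$ is represented by an honest $\Gamma$-$1$-cocycle $\gamma \mapsto \delta_\gamma$ valued in meromorphic (indeed holomorphic) functions, and that the extension is classified by this cocycle up to coboundary. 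The remaining verifications — that the transformation law is the equivariance condition, and that meromorphic splittings biject with such $s$ — are then purely formal manipulations of the twisted action, directly generalizing the explicit matrix computation already carried out for $\kappa$ in the fixed-curve case.
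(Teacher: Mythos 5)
Your proposal is correct and follows essentially the same route as the paper: both represent $\mathcal{V}_{\delta}$ by the upper-triangular automorphy factor with diagonal entries $j_{\gamma}$ and $1$ and off-diagonal entry $\delta_{\gamma}$, and both read off the transformation law $\gamma^{*}s=j_{\gamma}s+\delta_{\gamma}$ from the requirement that the lift $\binom{s}{1}$ of the section $1\in H^{0}(\mathcal{O})$ transform according to that factor. The extra justifications you supply (Steinness of $X$, the identification of stack cohomology with group cohomology of global sections, the cocycle/coboundary bookkeeping) are correct elaborations of what the paper leaves implicit.
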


\begin{proof} This is basically trivial, the point is that the bundle $\mathcal{V}_{\delta}$ is represented by the non-abelian cohomology class in $H^{1}(\Gamma,GL_{2}(\mathcal{O}_{X}))$, given by the matrices $$J_{\gamma}:=\begin{bmatrix}\,j_{\gamma}&\delta_{\gamma}\\ \,0&1\end{bmatrix}.$$ A section is then given by a vector $\binom{s_{0}}{s_{1}}$ transforming according to the automorphy factor $J$. A trivialization is given by a lift of the section $1$, thus a section of the form $\binom{s}{1}$, whence we are done.\end{proof}
Let us now say something about Dolbeault representatives for the relevant cohomology classes. We have a class $\delta\in H^{1}(X//\Gamma,\mathcal{L}_{j})$. A Dolbeault representative for this class is given by a smooth $(0,1)$-form on $X$ with the property $\gamma^{*}\lambda=j_{\gamma}\lambda$, modulo the $\bar{\partial}$-operator (acting on functions with the same transformation property). 

\begin{lemma} We recover the above cocycle representation of the class $\delta$ from $\lambda$ as follows - take any $\bar{\partial}$-antiderivative for $\lambda$, denoted $F$. Then the cocycle is given by $\gamma\mapsto F-F|^{j}_{\gamma}$.\end{lemma}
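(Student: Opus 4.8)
The plan is to read the statement as an explicit description of the connecting homomorphism attached to the Galois cover $X\to X//\Gamma$, or, what amounts to the same thing, of the comparison between the Dolbeault and the group-cohomological (automorphic) computations of $H^{1}(X//\Gamma,\mathcal{L}_{j})$. Two consequences of $X$ being Stein will be used. First, higher coherent cohomology of $X$ vanishes, so the descent spectral sequence for $X\to X//\Gamma$ collapses and gives $H^{\bullet}(X//\Gamma,\mathcal{L}_{j})\cong H^{\bullet}(\Gamma,\mathcal{O}(X)|^{j})$, the group cohomology of the slash-module. Second, the $\bar\partial$-equation is globally solvable on $X$, which is exactly what makes the antiderivative $F$ exist. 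Under these identifications a Dolbeault representative $\lambda$ is precisely a slash-invariant $\bar\partial$-closed $(0,1)$-form, i.e. a global section over $X//\Gamma$ of the sheaf of $\mathcal{L}_{j}$-valued closed $(0,1)$-forms, and the task is to show that the connecting map sends its class to that of $\gamma\mapsto F-F|^{j}_{\gamma}$.

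First I would produce $F$: since $X$ is Stein and $\bar\partial\lambda=0$, solvability of the $\bar\partial$-equation yields a smooth $F$ on $X$ with $\bar\partial F=\lambda$. Such an $F$ is a lift of $\lambda$ to a smooth, but in general \emph{not} slash-invariant, section on the cover, and $\gamma\mapsto F-F|^{j}_{\gamma}$ is exactly the \v{C}ech differential of this non-invariant lift, which is the general recipe for the connecting homomorphism.

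The key computation is that the values of this cochain are holomorphic. Here I would use that $\bar\partial$ commutes with the slash operator — because $j_{\gamma}\in\mathcal{O}_{X}^{*}$ is holomorphic and $\bar\partial$ commutes with pullback along $\gamma$ — together with the slash-invariance $\lambda|^{j}_{\gamma}=\lambda$ built into the notion of a Dolbeault representative (equivalently $\gamma^{*}\lambda=j_{\gamma}\lambda$). Concretely
$$\bar\partial\big(F-F|^{j}_{\gamma}\big)=\bar\partial F-\big(\bar\partial F\big)|^{j}_{\gamma}=\lambda-\lambda|^{j}_{\gamma}=0,$$
so each $F-F|^{j}_{\gamma}$ lies in $\mathcal{O}(X)$ and the cochain does take values in the coefficient module $\mathcal{O}(X)|^{j}$.

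It then remains to dispatch three formal points. The cochain is a $1$-cocycle, automatically, being the coboundary of the $0$-cochain $F$ for the right $\Gamma$-action given by the slash, as the identity $(F-F|^{j}_{\gamma_{1}})|^{j}_{\gamma_{2}}+(F-F|^{j}_{\gamma_{2}})=F-F|^{j}_{\gamma_{1}\gamma_{2}}$ shows. The resulting class is independent of the choice of $F$: two antiderivatives differ by some $h\in\mathcal{O}(X)$, whereupon the cocycles differ by $h-h|^{j}_{\gamma}$, a genuine coboundary in $\mathcal{O}(X)|^{j}$. Finally, that this class is $\delta$ is precisely the assertion that we have computed the connecting homomorphism, matched against the automorphic cocycle of the preceding lemma. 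I expect this last identification to be the only real obstacle, and it is one of bookkeeping rather than substance: it requires reconciling the two standard normalisations of an automorphy $1$-cocycle for $\mathcal{L}_{j}$ — the upper-triangular matrix convention producing $\delta_{\gamma}$ via $\gamma^{*}s=j_{\gamma}s+\delta_{\gamma}$, versus the slash-coboundary convention produced here — which differ by the canonical twist by $j_{\gamma}$ and represent the same class in $H^{1}(\Gamma,\mathcal{O}(X)|^{j})$. The analytic content, by contrast, is entirely contained in the one-line $\bar\partial$-computation above.
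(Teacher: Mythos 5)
Your proposal is correct and follows essentially the same route as the paper's own (much terser) proof: existence of $F$ from solvability of $\bar\partial$ on the Stein space $X$, holomorphy of $\gamma\mapsto F-F|^{j}_{\gamma}$ from the two equations $\bar\partial F=\lambda$ and $\gamma^{*}\lambda=j_{\gamma}\lambda$, and a final routine identification with $\delta$. You have simply made explicit the details the paper compresses into ``it is not hard to show that this is the desired cocycle.''
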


\begin{proof} First note that since we do not demand that $F$ has the same transformation properties as $\lambda$, such an $F$ necessarily exists and the space of such is a torsor for the space of holomorphic functions on $X$. Further note that whilst $F$ is not necessarily holomorphic the corresponding cocycle is, because of the equations $\bar{\partial}F=\lambda$ and $\gamma^{*}\lambda=j_{\gamma}\lambda$. It is not hard to show that this is the desired cocycle. \end{proof}

\begin{corollary} If $\binom{s}{1}$ is a section of $\mathcal{V}_{\delta}$ lifting $1\in H^{0}(\mathcal{O})$, then the $\mathcal{C}^{\infty}$ function, $s+F,$ transforms according to the automorphy factor $j$. \end{corollary}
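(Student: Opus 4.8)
The plan is to read off the two transformation laws in play---one for the meromorphic lift $s$, one for the smooth primitive $F$---and to observe that their automorphy defects are exact negatives of one another, so that they cancel in the sum $s+F$, leaving a function transforming cleanly by $j$.

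First I would record what Lemma 3.1 gives. Since $\binom{s}{1}$ is a section of $\mathcal{V}_\delta$ lifting $1\in H^0(\mathcal{O})$, applying the automorphy factor $J_\gamma$ of Lemma 3.1 to it yields $\gamma^* s=j_\gamma s+\delta_\gamma$ for every $\gamma\in\Gamma$. Thus $s$ is an honest section of $\mathcal{L}_j$ except for the coboundary error $\delta_\gamma$, and this error is precisely the obstruction to splitting the extension $\mathcal{L}_j\to\mathcal{V}_\delta\to\mathcal{O}$ holomorphically---which is exactly why $s$ is only a \emph{meromorphic} trivialization.

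Second I would unwind Lemma 3.2. By hypothesis $\bar\partial F=\lambda$ with $\gamma^*\lambda=j_\gamma\lambda$, and Lemma 3.2 identifies the cocycle representing $\delta$ with $\gamma\mapsto F-F|^j_\gamma$. The key mechanism, already present in the proof of Lemma 3.2, is that $j_\gamma$ is holomorphic, so $\gamma^* F$ and $j_\gamma F$ are both $\bar\partial$-primitives of $j_\gamma\lambda$ and therefore differ by a holomorphic function; rewriting the slashed cocycle in terms of the pullback then exhibits the automorphy defect of $F$ as exactly $-\delta_\gamma$, i.e. $\gamma^* F=j_\gamma F-\delta_\gamma$. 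In words: the non-holomorphic $F$ sees the \emph{same} extension class as $s$, but with the opposite sign, and is thus tailor-made to correct $s$.

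Finally I would combine the two laws, whereupon
$$\gamma^*(s+F)=\gamma^* s+\gamma^* F=(j_\gamma s+\delta_\gamma)+(j_\gamma F-\delta_\gamma)=j_\gamma(s+F),$$
so that $s+F$ transforms according to $j$; being a sum of a meromorphic and a smooth function it is $\mathcal{C}^\infty$ away from the poles of $s$, and, since the choices of $\bar\partial$-primitive $F$ form a torsor under global holomorphic functions, the construction is canonical up to a genuine (holomorphic) section of $\mathcal{L}_j$. I expect the only real obstacle to be sign and convention bookkeeping: one must pin down the slash action $f\mapsto f|^j_\gamma$ together with the direction of the $\Gamma$-action (so that $(\gamma_1\gamma_2)^*=\gamma_2^*\gamma_1^*$ and the matrix cocycle condition for $J$ are mutually consistent) precisely enough that the defect extracted from Lemma 3.2 is genuinely $-\delta_\gamma$, rather than $+\delta_\gamma$ or a $j_\gamma$-twist thereof. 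Once the conventions of Lemma 3.1 and Lemma 3.2 are aligned in this way the cancellation is forced, and the proof is the one-line display above; this $s+F$ is exactly the real-analytic completion of the meromorphic datum (the analogue, in the setting of Section 3, of Zwegers' passage from $\mu$ to $\hat\mu$ via the correction $R$).
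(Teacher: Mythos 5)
Your proof is correct and is exactly the intended argument: the paper states this corollary without proof, treating it as immediate from combining the transformation law $\gamma^*s=j_\gamma s+\delta_\gamma$ of Lemma 3.1 with the identification $\delta_\gamma=F-F|^j_\gamma$ of Lemma 3.2, so that the automorphy defects cancel in $s+F$. Your explicit flagging of the sign/slash-action conventions is reasonable care, since the paper itself does not pin down $f\mapsto f|^j_\gamma$ precisely; the statement is only true once those conventions are aligned as you describe.
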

As such, we see that $F$ is simultaneously a trivialization of the extension class, expressed in either the language of automorphy factors or of $\bar{\partial}$-cohomology. Presumably, though we will not claim to understand the physics, this is an instance of the principle one sees in the physics literature, whereby \emph{modular anomalies} and \emph{holomorphic anomalies} are essentially interchangeable. We will refer to $F$ as \emph{controlling} the extension.

\begin{example}Let us describe a toy model. We can take $X=\mathbb{C}$, $\Gamma=\mathbb{Z}^{2}$ so that the quotient is the elliptic curve $E_{\tau}$ with coordinate $z$. We take the automorphy factor to be trivial. We have a class $\delta\in H^{1}(E,\mathcal{O})$, given in automorphic terms as the element of $\delta\in H^{1}(\mathbb{Z}^{2},\mathcal{O}(\mathbb{C}))$ with $\delta\binom{1}{0}=0, \,\,\delta\binom{0}{1}=1.$ A Dolbeault representative is given by the $(0,1)$-form $\frac{1}{2iy}d\bar{z}$, with $y:=\mathfrak{Im}\tau$. The relevant non-holomorphic function $F(z,\bar{z})$ controlling the extension can be taken to be $$F(z,\bar{z}):=\frac{\bar{z}-z}{2iy}.$$ A meromorphic trivialization of the extension corresponding to $\delta\in H^{1}$ is given by the Weierstrass $\zeta$ function (a suitably normalized indefinite integral of $\wp(z)$) and we have that $\zeta(z)+F(z,\bar{z})$ is $\mathbb{Z}^{2}$-invariant. \end{example}

\subsection{Dictionary With Zwegers}We now sketch how to specialize the above considerations to the context studied in the work of Zwegers, nothing is susbtantially new here, we just supply some algebro-geometric intepretations. Recall that we set $x=e^{2\pi iu}, y=e^{2\pi iv}, q=e^{2\pi i\tau}$, with $\mathfrak{Im}(\tau)>0$. In the notation of the previous subsection \begin{definition}We take $X:=\mathbb{C}\times\mathbb{C}\times\mathfrak{h}$, $\Gamma:=SL_{2}\rtimes(\mathbb{Z}^{2}\times\mathbb{Z}^{2})$. The modular parameter will be denoted $\tau$ and the two (additive) elliptic parameters $u$ and $v$. We refer to the analytic stack $X//\Gamma$ as $\mathcal{C}$. \end{definition}

\begin{remark} Putting $\mathcal{M}:=(\mathbb{C}\times\mathfrak{h})//SL_{2}\rtimes\mathbb{Z}$, it is standard that $\mathcal{C}\rightarrow\mathcal{M}$ is the universal curve on the moduli stack of pairs consisting of an elliptic curve and a degree $1$ line bundle on it. A pair $(v;\tau)\in \mathcal{M}(\mathbb{C})$ determines an elliptic curve $E_{\tau}$, and the line bundle $\Theta(u+v)\cong\Theta(u)\otimes L_{-v}$ on it.\end{remark}

As mentioned, $\mathcal{C}\rightarrow\mathcal{M}$ is a universal family, and so there is a line bundle on it realizing the various $\Theta(u+v)$ fibrewise, which is unique once we fix a normalization for the push-forward line bundle, ie what will become the odd framing. The resulting line bundle will be denoted $\Theta(u+v;\tau)$. It is determined by demanding that $\vartheta(u+v,\tau)$ is a section of it. Note that this is well defined as the zeroes of $\vartheta$ are invariant under the \emph{full} group $\Gamma:=SL_{2}\rtimes(\mathbb{Z}^{2}\times\mathbb{Z}^{2})$, because we are working with the $\vartheta$-function corresponding to the odd spin structure. A representative for the line bundle in automorphic terms is given by a cocycle $j$, so that for $\gamma\in SL_{2}(\mathbb{Z})$, we have $$j_{\gamma}(u,v;\tau)\sim \frac{1}{(c\tau+d)^{\frac{1}{2}}}\exp\Big( -\pi i\frac{(u+v)^{2}}{c\tau+d}\,\Big),$$ with constant of proportionality some eighth root of unity.
\begin{lemma} There is a rank $2$ vector bundle on the stack $\mathcal{C}$, denoted again $\mathcal{K}$, fitting into an extension $$\mathcal{O}_{\mathcal{C}}\rightarrow\mathcal{K}\rightarrow\Theta(u+v;\tau),$$ realizing fibrewise (over the moduli stack of elliptic curves) the extension of lemma 2.4. Further $\kappa$ gives a meromorphic splitting of the extension.\end{lemma}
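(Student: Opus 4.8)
The plan is to mirror the fixed-curve construction of Lemma 2.4, now carried out globally through the automorphic formalism of the previous subsection. I would apply that formalism with $\mathcal{L}_j := \Theta(u+v;\tau)^{-1}$, whose automorphy factor $j$ is the one displayed above, and build the dual bundle $\mathcal{K}^{*}$ fitting into $\Theta(u+v;\tau)^{-1}\rightarrow\mathcal{K}^{*}\rightarrow\mathcal{O}_{\mathcal{C}}$; the bundle $\mathcal{K}$ of the statement is then its dual, exactly as in the $E$-fixed case. By Lemma 3.1 the whole problem reduces to the candidate meromorphic section $\binom{\kappa}{1}$, i.e. to taking $s:=\kappa(u,v;\tau)$ and \emph{defining} the cocycle by $\delta_{\gamma}:=\gamma^{*}\kappa-j_{\gamma}\kappa$ for $\gamma\in\Gamma=SL_{2}\rtimes(\mathbb{Z}^{2}\times\mathbb{Z}^{2})$. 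The content of the lemma is that this $\delta_{\gamma}$ is \emph{holomorphic}, defines a nonzero class in $H^{1}(\mathcal{C},\Theta(u+v;\tau)^{-1})$, and restricts fibrewise to the class of Lemma 2.4; granting this, Lemma 3.1 hands us $\mathcal{K}^{*}=\mathcal{V}_{\delta}$ with meromorphic splitting $\binom{\kappa}{1}$, and dualizing yields $\mathcal{K}$ and its splitting.

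I would organize the verification along the semidirect-product decomposition of $\Gamma$. For the elliptic part $\mathbb{Z}^{2}\times\mathbb{Z}^{2}$ the modular parameter is fixed, and the computation is verbatim the one underlying Lemma 2.4: the upper-triangular transformation matrices recorded there are the specialization at fixed $\tau$ of the global automorphy matrices, so $\delta_{\gamma}$ is (up to the normalization absorbed into the framing) a multiple of $\vartheta$, manifestly holomorphic. This simultaneously establishes the fibrewise claim and, via the nonvanishing proved in Lemma 2.4, the nontriviality of the class $\delta$.

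The substantive step, and the main obstacle, is the modular part $SL_{2}(\mathbb{Z})$, where holomorphicity of $\delta_{\gamma}$ is \emph{not} formal: $\kappa$ is the mock modular Appell--Lerch sum, so a priori $\gamma^{*}\kappa-j_{\gamma}\kappa$ carries a non-holomorphic defect. This is exactly where Zwegers' theory enters, and I would import it as a black box. In the language of the previous subsection, his non-holomorphic correction $R$ plays the role of the controlling function $F$: the completion $\hat\mu=\kappa+F$ transforms under $SL_{2}(\mathbb{Z})$ by the pure automorphy factor $j$, with no holomorphic anomaly. Granting this, the $\bar\partial$-argument of Lemma 3.2 applies unchanged, since $\bar\partial(\gamma^{*}F-j_{\gamma}F)=\gamma^{*}\bar\partial F-j_{\gamma}\bar\partial F=\gamma^{*}\lambda-j_{\gamma}\lambda=0$, so the cocycle $\gamma\mapsto F-F|^{j}_{\gamma}$ is holomorphic despite $F$ being non-holomorphic, and by Corollary 3.1 it coincides with $\gamma^{*}\kappa-j_{\gamma}\kappa$. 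Thus $\delta$ is a genuine holomorphic cohomology class, and $R$ is at once its automorphic and its $\bar\partial$-cohomological trivialization, matching the ``modular anomaly equals holomorphic anomaly'' slogan recorded above.

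Assembling these pieces, Lemma 3.1 produces $\mathcal{K}^{*}=\mathcal{V}_{\delta}$, hence the stated extension for $\mathcal{K}$, with $\binom{\kappa}{1}$ the meromorphic splitting; the elliptic computation identifies the restriction of $\mathcal{K}$ to each $E_{\tau}$ with the bundle of Lemma 2.4. The only genuinely nontrivial input is the $SL_{2}(\mathbb{Z})$-transformation law for $\kappa$ together with the modularity of its completion $\hat\mu$; everything else is either the descent formalism of Lemma 3.1 and Lemma 3.2 or a direct specialization of the fixed-curve calculation.
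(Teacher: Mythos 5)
Your proposal is correct and follows the same overall architecture as the paper's proof: both construct the dual bundle $\mathcal{K}^{*}$ from an upper-triangular automorphy factor with prescribed diagonal entries, reduce everything to the off-diagonal cocycle $\delta\in H^{1}(\Gamma,\mathcal{O}(X)|^{j})$, organize the verification along $\Gamma=SL_{2}\rtimes(\mathbb{Z}^{2}\times\mathbb{Z}^{2})$ with the elliptic part read off from Lemma 2.4, and import Zwegers for the modular part. The routes diverge in how the $SL_{2}$ contribution is certified. The paper \emph{prescribes} the cocycle explicitly --- $\delta_{T}=0$ and $\delta_{S}=\tfrac{1}{2i}h(u+v;\tau)$ with $h$ the Mordell integral --- and then checks the cocycle condition by citing Zwegers' Proposition 1.5(2), which says precisely that $\binom{\kappa}{1}$ transforms by the resulting matrices. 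You instead \emph{define} $\delta_{\gamma}:=\gamma^{*}\kappa-j_{\gamma}\kappa$, so the cocycle condition comes for free, and the burden shifts to holomorphy of $\delta_{\gamma}$, which you obtain from the exact $j$-modularity of the completion $\kappa+F$ together with the $\bar{\partial}$-argument of Lemma 3.2. Both arguments invoke Zwegers at the same juncture but through different results of his (the explicit $S$-transformation of $\mu$ versus the modularity of $\tilde{\mu}$ and the identity for $\bar{\partial}R$); the paper's version has the advantage of exhibiting the Mordell integral as a matrix coefficient of the automorphy factor, which is the point of the remark that follows the lemma, while yours foregrounds the ``modular anomaly equals holomorphic anomaly'' principle of subsection 3.1. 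One small inaccuracy: the off-diagonal entries for the elliptic generators in Lemma 2.4 are $-x$ (the defect of $\kappa$ itself in that trivialization), not multiples of $\vartheta$ (which is the defect of $\vartheta\kappa$); this does not affect the holomorphy you need, nor the fibrewise identification with the extension of Lemma 2.4.
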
\begin{proof} Again we construct the dual bundle $\mathcal{K}^{*}$. It suffices to specify an appropriate automorphy factor in $H^{1}(\Gamma,GL_{2}(\mathcal{O}_{X}))$. Now the condition that $\mathcal{K}$ fits into an exact sequence as desired means we are looking for upper triangular automorphy factors and that the diagonal entries are specified as those corresponding to $\mathcal{O}$ and $\Theta(u+v,\tau)^{-1}$, whence we are only looking for the upper right entries. Equivalently we are looking for a cocycle $\delta\in H^{1}(\Gamma,\mathcal{O}(X)|^{j})$. Further, we know the values on $\mathbb{Z}^{2}\times\mathbb{Z}^{2}$, as we have given them above in lemma 2.4. 

It remains thus to specify $\delta_{\gamma}$ for $\gamma\in SL_{2}$. We write $T:= \tau\mapsto\tau+1$ and $S:=\tau\mapsto -\frac{1}{\tau}$ and stipulate $\delta_{T}=0$, $\delta_{S}(u,v;\tau)=\frac{1}{2i}h(u+v,\tau)$, where $h$ is the \emph{Mordell Integral} considered by Zwegers in \cite{Zw}. To see that in this manner we obtain an automorphy factor note that the results of \cite{Zw} (cf. in particular proposition 1.5 (2) of \emph{loc. cit.}), imply that the vector of meromorphic functions $\binom{\kappa(u,v;\tau)}{1}$ transforms according to the putative automorphy factor, whence we are done as we need only check the cocycle condition for the off-diagonal entries of the automorphy factors. That $\kappa$ gives a meromorphic splitting is now clear.\end{proof}

\begin{remark} Recall that Zwegers defines a non-holomorphic function $R(u;\tau)$ with the property that his completed function $\tilde{\mu}:=\mu+\frac{i}{2}R(u-v;\tau)$ is a section of $\Theta(u-v;\tau)^{-1}$. This of course translates trivially into such a function, which we denote $F$, so that we have $\kappa+F$ a section of $\Theta(u+v;\tau)$, let us now see that the results of Zwegers are in line with the generalities of subsection 3.1. Indeed we can easily check that lemma 1.8 of \emph{loc. cit.} provides precisely a Dolbeault representative of the cohomology class of the extension, further it is almost tautological that $R$ defines a $\mathcal{C}^{\infty}$ trivialization of the extension class expressed in terms of automorphy factors. We summarize this as follows; \begin{itemize}\item The Mordell Integral (cf. \cite{Zw} subsection 1.2) $h$ arises as a matrix coefficient of an automorphy factor for a rank two bundle on $\mathcal{C}$, which is an extension of $\mathcal{O}$ by $\Theta(u+v;\tau)^{-1}$. \item The extension class in $H^{1}(\mathcal{C},\Theta(u+v,\tau)^{-1})$ has a Dolbeault representative $[\bar{\partial}R]$, cf lemma 1.8 of \emph{loc. cit.}. As such, $R$, controls the extension in the sense of 3.2 above.\item $\mu$ is a meromorphic trivialization of the extension, so that $\mu +R$ is necessarily a smooth section of the sub-bundle $\Theta(u+v;\tau)^{-1}$.\end{itemize}\end{remark}

We let $\mathcal{M}^{\mathcal{N}=2}_{ell}$ denote the moduli of $\mathcal{N}=2$ elliptic curves with odd fibre direction a degree one line bundle, equipped with an odd framing. Let $\mathcal{C}^{\mathcal{N}=2}_{ell}\rightarrow\mathcal{M}^{\mathcal{N}=2}_{ell}$ denote the universal family of such $\mathcal{N}=2$ curves. 
Note that the reduced parts of these spaces are respectively just $\mathcal{M}$ and $\mathcal{C}$. Further, the universal family over the reduced part of the base is $(\mathcal{C}\,|\,\Theta(u+v,\tau))\rightarrow\mathcal{M}$. The $0|1$-dimensional thickening providing the genuine moduli space is of course determined by the bundle $\mathcal{K}$. \begin{tcolorbox}\begin{theorem} There are isomorphisms, $\mathcal{C}^{\mathcal{N}=2}_{ell}\cong(\mathcal{C}\,|\,\mathcal{K})$ and $(\mathcal{M}\,|\,\mathcal{O})\cong\mathcal{M}^{\mathcal{N}=2}_{ell}.$ The universal family is induced from the short exact sequence $\mathcal{O}\rightarrow\mathcal{K}\rightarrow\Theta(u+v;\tau).$ It is not a split family, and has a meromorphic splitting given by the Appell-Lerch sum $\kappa(u,v;\tau)$. There is an $\mathbb{R}$-analytic splitting given by Zwegers' correction function $R(u+v;\tau)$. \end{theorem}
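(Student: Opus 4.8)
The plan is to globalize the proof of Corollary 2.1 over the base $\mathcal{M}$, with Lemma 3.3 supplying the one genuinely new ingredient, namely a model for $\mathcal{K}$ that is valid in the modular direction. First I would assemble the candidate universal family. The extension $\mathcal{O}_{\mathcal{C}}\to\mathcal{K}\to\Theta(u+v;\tau)$ of Lemma 3.3 lives on $\mathcal{C}$ relative to $\mathcal{M}$, and the tautological inclusion $\mathcal{O}_{\mathcal{C}}\hookrightarrow\mathcal{K}$ is a relative global odd section. By the relative version of Lemma 2.1 this section defines a smooth family $\pi:(\mathcal{C}\,|\,\mathcal{K})\to(\mathcal{M}\,|\,\mathcal{O})$ of $\mathcal{N}=2$ curves whose relative odd cotangent sheaf $\Omega^{0|1}_{\pi}$ is the cokernel $\Theta(u+v;\tau)$, and whose restriction to the reduced base recovers $(\mathcal{C}\,|\,\Theta(u+v;\tau))\to\mathcal{M}$, with distinguished section inherited from that of $\mathcal{C}\to\mathcal{M}$.

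Next I would install the odd framing, which is the global analogue of Lemma 2.3, namely a trivialization of $\mathbb{R}^{0}\pi_{*}\Omega^{0|1}_{\pi}\cong\mathbb{R}^{0}\pi_{*}\Theta(u+v;\tau)$. Fibrewise this line bundle is one-dimensional by the computation preceding Lemma 2.3, and globally it is trivialized by $\vartheta(u+v;\tau)$, which is a well-defined section precisely because its zero locus is invariant under the full group $\Gamma$ (the odd spin structure, as noted before Lemma 3.3). With family, section and framing in hand, the universal property of $\mathcal{M}^{\mathcal{N}=2}_{ell}$ produces a classifying morphism $\Phi:(\mathcal{M}\,|\,\mathcal{O})\to\mathcal{M}^{\mathcal{N}=2}_{ell}$ together with an identification of $(\mathcal{C}\,|\,\mathcal{K})$ with the pullback $\Phi^{*}\mathcal{C}^{\mathcal{N}=2}_{ell}$.

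The core step is to prove $\Phi$ is an isomorphism, which yields both claimed isomorphisms at once. By construction $\Phi$ is the identity on reduced parts, both being $\mathcal{M}$, so by Lemma 2.2 it suffices to check $\Phi$ is an isomorphism on $\mathbb{A}^{0|1}$-valued points, equivalently that $\Phi^{*}\Omega^{0|1}_{\mathcal{M}^{\mathcal{N}=2}_{ell}}\to\Omega^{0|1}_{(\mathcal{M}|\mathcal{O})}=\mathcal{O}$ is an isomorphism. Since $\mathcal{M}^{\mathcal{N}=2}_{ell}$ is a purely odd $0|1$-thickening of $\mathcal{M}$, the relevant sheaf is dual to the fibrewise odd-deformation sheaf $\mathbb{R}^{1}\pi_{*}\Theta(u+v;\tau)^{-1}\cong\mathcal{O}$, and the differential of $\Phi$ in the odd direction is the Kodaira--Spencer class of $(\mathcal{C}\,|\,\mathcal{K})$, i.e. the extension class of $\mathcal{K}$. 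Over each geometric point $(v;\tau)$ of $\mathcal{M}$ this restricts to the fixed-curve situation of Corollary 2.1, where Lemma 2.4 shows the class is non-zero; hence it is a nowhere-vanishing section of $\mathcal{O}$ over (reduced) $\mathcal{M}$, and $\Phi$ is an isomorphism. In particular the family cannot be split, as splitness would force fibrewise splitness, contradicting Lemma 2.4. The two splittings are then read off directly: $\kappa$ is the meromorphic lift of $1\in H^{0}(\mathcal{O})$ exhibited in Lemma 3.3, hence a meromorphic splitting, while the Remark following Lemma 3.3 shows that $\kappa+F$, with $F$ built from Zwegers' $R(u+v;\tau)$, is a smooth section of the sub-bundle, giving the $\mathbb{R}$-analytic splitting.

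The main obstacle I anticipate is not in this assembly, which is essentially formal once Lemma 3.3 is granted, but in making precise that the fibrewise checks suffice over the stack $\mathcal{M}$ rather than merely over its geometric points. Concretely, one must confirm that the framing trivialization by $\vartheta$ is compatible with the half-integral-weight $SL_{2}$ automorphy factor $j_{\gamma}$, with its $(c\tau+d)^{-1/2}$ and eighth-root-of-unity ambiguities, and that the identification of the odd conormal sheaf with $\mathcal{O}$ is genuinely $\Gamma$-equivariant. This metaplectic bookkeeping, already implicit in the cocycle condition for $\delta$ on all of $\Gamma$ verified in Lemma 3.3, is the one place where care beyond the fixed-$E$ case of Corollary 2.1 is required.
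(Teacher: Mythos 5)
Your proposal is correct and follows essentially the same route as the paper, whose proof of this theorem is simply the observation that the argument of Corollary 2.1 carries over verbatim once Lemma 3.3 supplies the global extension; you have faithfully filled in that globalization (classifying map, reduction to $\mathbb{A}^{0|1}$-points via Lemma 2.2, fibrewise non-vanishing via Lemma 2.4, and the two splittings from Lemma 3.3 and the subsequent remark). The metaplectic bookkeeping you flag is indeed the only point where the global case exceeds the fixed-$E$ case, and it is exactly what Lemma 3.3's cocycle verification is there to handle.
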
\end{tcolorbox}

\begin{proof} The proof is essentially identical to the proof of corollary 2.1 above. \end{proof}

\begin{remark} Note that $R$ is real-analytic. The existence of a \emph{smooth} such $R$ is a formal consequence of the existence of partitions of unity, it is considerably less obvious fact that a real-analytic such also exists by general cohomological arguments. This follows from Proposition 2.3 of \cite{AH}, which says that real-analytic coherent cohomology vanishes.\end{remark}

\end{document}